\numberwithin{equation}{section}
\theoremstyle{plain}
\newtheorem{theorem}[subsection]{Theorem}
\newtheorem{lemma}[subsection]{Lemma}
\newtheorem{proposition}[subsection]{{Proposition}}
\theoremstyle{definition}
\theoremstyle{remark}
\newtheorem{remark}[subsection]{{Remark}}
\newtheoremstyle{RepTheorem} 
	{\topsep}{\topsep}
	{\itshape}
	{}
	{\bfseries}
	{.}
	{ }
	{\thmname{#1}\thmnote{ \bfseries #3}}
\theoremstyle{RepTheorem}
\def\Z {{\mathbb Z}}
\def\F {{\mathbb F}}
\def\Q {{\mathbb Q}}
\def\P {{\mathbb P}}
\def\mcA {{\mathcal A}}
\def\mcC {{\mathcal C}}
\def\mcD {{\mathcal D}}
\def\mcF {{\mathcal F}}
\def \a {{\mathfrak a}}
\def \d {{\mathfrak d}}
\def \m {{\mathfrak m}}
\def \mfD {{\mathfrak D}}
\newcommand{\Div}{\textnormal{div}}
\newcommand{\DIV}{\textnormal{Div}}
\newcommand{\Ht}{\textnormal{ht}}
\newcommand{\Pic}{\textnormal{Pic}}
\newcommand{\Val}{\textnormal{Val}}
\begin{document}

\title{Points of Bounded Height on Projective Spaces Over Global Function Fields via Geometry of Numbers}

\author{Tristan Phillips}

\begin{abstract}
We give a new proof of a result of DiPippo and Wan for counting points of bounded height on projective spaces over global function fields. The new proof adapts the geometry of numbers arguments used by Schanuel in the number field case.
\end{abstract}

\subjclass[2010]{Primary 11D45; Secondary 11G50, 11G25, 11G45, 14G05.}

\maketitle

\section{Introduction}

Studying rational points on projective varieties is a central problem in number theory. One often studies how many rational points are on a given variety. When the variety has infinitely many rational points, one is led to considers the more refined problem of giving an asymptotic for the number of points of bounded height.

In 1979 Schanuel proved a beautiful asymptotic formula for the number of rational points of bounded Weil height on projective space $\P^n$ over a number field $K$ \cite{Sch79}. 

\begin{theorem}[Schanuel] \label{thm:Schanuel}
Let $K$ be a number field of degree $d$ over $\Q$, with class number $h$, regulator $R$, discriminant $\Delta(K)$, $r_1$ real places, $r_2$ complex places, Dedekind zeta function $\zeta_K(s)$, and which contains $\omega$ roots of unity. 
Then the number of $K$-rational points on $\P^n$ of height at most $B$ is
\[
\frac{h R (2^{r_1}(2\pi)^{r_2})^{n+1}(n+1)^{r_1+r_2-1}}{\zeta_K(n+1)|\Delta(K)|^{(n+1)/2}\omega } B^{n+1}+\begin{cases}
O(B\log(B)) & \text{ if } \P^n(K)=\P^1(\Q)\\
O(B^{n+1-1/d}) & \text{otherwise.}
\end{cases}
\]
\end{theorem}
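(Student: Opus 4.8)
The plan is to adapt Schanuel's geometry-of-numbers argument to the weighted setting. I would first normalize the height: for a representative $x=(x_0,\dots,x_n)\in K^{n+1}\setminus\{0\}$ of a point of $\P(\w)(K)$, set $H(x)=\prod_v\max_i|x_i|_v^{1/w_i}$ with the $|\cdot|_v$ normalized so that the product formula holds. Under the weighted scaling $x\mapsto(\lambda^{w_0}x_0,\dots,\lambda^{w_n}x_n)$ each factor $|x_i|_v^{1/w_i}$ is multiplied by $|\lambda|_v$, so $\max_i$ scales by $|\lambda|_v$ and $H$ is $K^\times$-invariant by the product formula; hence it descends to $\P(\w)(K)$, and counting points of height $\le B$ becomes counting $K^\times$-orbits of tuples with $H\le B$. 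I would then decompose the $K^\times$-action into its ideal and unit parts: summing over the $h$ ideal classes, in each class I fix a representative fractional ideal and reduce to counting tuples in a fixed full-rank lattice $\Lambda\subset\prod_{v\mid\infty}K_v^{\,n+1}\cong\R^{d(n+1)}$ (Minkowski embedding) of covolume proportional to $|\Delta(K)|^{(n+1)/2}$, following Schanuel's construction while keeping track of the weights.

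The remaining scaling is by $\O_K^\times$. Via the Dirichlet logarithmic embedding I would cut out a fundamental domain for the free part of $\O_K^\times$ (rank $r_1+r_2-1$) acting on $\{H\le B\}$, rendering this region bounded; the Jacobian of the passage to radial and logarithmic coordinates produces the regulator $R$, and integrating the height constraint over the $r_1+r_2-1$ logarithmic directions yields the factor $|\w|^{r_1+r_2-1}$. The torsion units require the one genuinely new piece of bookkeeping: a root of unity $\zeta\in\mu_K$ fixes a point precisely when $\zeta^{w_i}=1$ for all $i$, and since $\mu_K$ is cyclic of order $\omega$ this holds for exactly $\gcd(\w,\omega)$ elements, which accounts for the factor $\gcd(\w,\omega)/\omega$. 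Finally, to restrict to primitive tuples (content ideal equal to the chosen representative rather than a proper multiple) I would Möbius-invert over the ideals of $\O_K$; since the height is homogeneous of weighted degree $|\w|$, this inversion contributes $\zeta_K(|\w|)^{-1}$.

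The main term is then extracted from a lattice-point count $\#(\Lambda\cap\mathcal D_B)=\Vol(\mathcal D_B)/\operatorname{covol}(\Lambda)+(\text{error})$, where $\mathcal D_B$ is $\{H\le B\}$ intersected with the unit fundamental domain. Because $H$ is multiplied by $s^d$ under the weighted real dilation $x_i\mapsto s^{w_i}x_i$, whose Jacobian on $\R^{d(n+1)}$ is $s^{d|\w|}$, the volume $\Vol(\mathcal D_B)$ grows like $B^{|\w|}$, and combining the archimedean ball volumes (a factor $2$ per real coordinate, $\pi$ per complex coordinate) with the $2^{-r_2}$ appearing in $\operatorname{covol}(\O_K)$ produces the factor $(2^{r_1}(2\pi)^{r_2})^{n+1}$. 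Assembling the sum over ideal classes, the regulator and $|\w|^{r_1+r_2-1}$ from the unit directions, the torsion factor $\gcd(\w,\omega)/\omega$, the Möbius factor $\zeta_K(|\w|)^{-1}$, and the covolume $|\Delta(K)|^{-(n+1)/2}$ reproduces the stated leading coefficient.

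I expect the genuine difficulty to lie in the error term. Controlling $\#(\Lambda\cap\mathcal D_B)-\Vol(\mathcal D_B)/\operatorname{covol}(\Lambda)$ requires that the boundary $\partial\mathcal D_B$ be Lipschitz-parametrizable (in the sense of Schanuel, or of the Masser--Vaaler/Widmer counting estimates) with implied constants uniform across the ideals appearing in the Möbius sum, so that this sum converges; tracking how the least-weighted coordinate direction scales yields the exponent $B^{|\w|-\min_i\{w_i\}/d}$. The sole exception is the degenerate case $K=\Q$, $n=1$ (so $\P(\w)(K)=\P^1(\Q)$), where the lattice is $\Z^2$, the boundary estimate is one power of $B$ weaker, and the Möbius sum $\sum_{m\le B}O(B/m)$ diverges logarithmically, producing the error $O(B\log B)$ in place of $O(B^{|\w|-\min_i\{w_i\}/d})$. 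The weight exponents, which make $\mathcal D_B$ anisotropic, and the torsion bookkeeping are precisely where the weighted case departs from the classical one, and where I would concentrate the care.
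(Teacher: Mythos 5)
The paper does not actually prove Theorem \ref{thm:Schanuel}; it is quoted from Schanuel \cite{Sch79} (and, for general weights, from \cite{Phi22}) as motivation for the function-field argument in Section \ref{sec:PROJ}. Your sketch is a correct outline of exactly that geometry-of-numbers strategy --- partition by ideal class, pass to $\O_K^\times$-orbits in the affine cone over a fixed fractional ideal, count lattice points in a Lipschitz-bounded fundamental domain, and M\"obius-invert to impose primitivity --- which is the same template the paper transplants to global function fields, so there is nothing to object to at the level of approach; the only points needing care beyond what you wrote are the enlarged torsion stabilizers on coordinate hyperplanes (which fall into the error term) and the fact that the weighted scaling ideal involves $\min_i v(x_i)/w_i$, which need not be an integer.
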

     
 Schanuel's Theorem was independently extended to global function fields by DiPippo \cite{Dip90} and Wan \cite{Wan91}. Both of their proofs are essentially the same, studying the relevant height zeta functions and exploiting facts about zeta functions of varieties over finite fields. In this article we will give a new proof of the function field case which uses techniques from the geometry of numbers over function fields, and which more closely resembles Schanuel's proof in the number field case (albeit with a weaker error term than that obtained by DiPippo and Wan; see Theorem \ref{thm:WProjFin}).
 
 Before stating the result over global function fields, we introduce some notation. Let $\mcC$ be an absolutely irreducible smooth projective curve of genus $g$ over $\F_q$, let $K$ denote the function field of $\mcC$, and let $h_K$ denote the class number of $K$.

Let $\DIV(\mcC)$ denote the set of divisors on $X$ and let 
\[
\DIV^+(\mcC):=\left\{\sum_{P} n_P P\in\Div(\mcC):n_P\geq 0 \text{ for all } P\right\}
\]
denote the set of effective divisors on $X$.
Define by
\[
 Z(\mcC,t):=\exp\left(\sum_{d=1}^\infty \# \mcC(\F_{q^d}) \frac{t^d}{d}\right)=\sum_{D\in \DIV^+(\mcC)} t^{\deg(D)}
\]
the classical zeta function of $\mcC$, and let $\zeta_\mcC(s)=Z(\mcC,q^{-s})$ denote the Dedekind zeta function of $\mcC$.

For $f\in K$ let $\Div(f)$ denote the divisor of $f$ and let $\inf_{i}(x_i)$ denote the greatest divisor $D$ of $X$ such that $D\leq \Div(x_i)$ for all $i$.
 We define the \emph{(logarithmic) height} of a $K$-rational point of projective space $x=[x_0:x_1:\cdots:x_n]\in \P^n(K)$ by 
\[
\Ht(x):=-\deg(\inf_i(x_i)).
\] 
Set
\[
A_r(\P^n):=\{x\in \P^n(K) : \Ht(x)=r\},
\]
the number of $K$-rational points on $\P^n$ of height  $r$.

\begin{theorem}[DiPippo-Wan]\label{thm:DiPippo-Wan}
With notation as above, for any $\varepsilon>0$ there exist polynomials $b_i(r)$ in $r$, whose coefficients are algebraic numbers, and algebraic integers $\alpha_i$, with $|\alpha_i|=\sqrt{q}$, such that
\begin{align*}
A_r(\P^n)&=\frac{h_K q^{(n+1)(1-g)}}{\zeta_\mcC(n+1)(q-1)}q^{(n+1)r}+\sum_{i=1}^{2g} b_i(r)\alpha_i^r\\
&=\frac{h_K q^{(n+1)(1-g)}}{\zeta_\mcC(n+1)(q-1)}q^{(n+1)r}+O(q^{r/2+\varepsilon}).
\end{align*}
\end{theorem}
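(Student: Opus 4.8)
The plan is to adapt Schanuel's argument with the Riemann--Roch theorem playing the role of the geometry of numbers over $K$: counting global sections of a line bundle is the function field analog of Minkowski's lattice-point count. First I would parametrize points by divisor classes. A point $x=[x_0:\cdots:x_n]\in\P^n(K)$ determines the divisor $\inf_i\Div(x_i)$ of degree $-r$, well defined up to principal divisors, hence a class in $\Pic^{-r}(\mcC)$. Fixing one representative divisor $D$ in each class and scaling so that $\inf_i\Div(x_i)=D$ exactly leaves precisely the scaling freedom by $\F_q^\times$, so
\[
(q-1)A_r(\P^n)=\sum_{[D]\in\Pic^{-r}(\mcC)}\#\{(x_0,\dots,x_n): \inf_i\Div(x_i)=D\}.
\]
Here I would use that $\#\Pic^m(\mcC)=h_K$ for every $m$, so that divisors of every degree exist (F.~K.~Schmidt).

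The second step supplies the geometry-of-numbers input. Writing $M=-D$, the tuples with $\inf_i\Div(x_i)\geq D$ are exactly the elements of $H^0(\mcC,\O(M))^{n+1}$, of which there are $q^{(n+1)h^0(M)}$; by Riemann--Roch $h^0(M)=\deg M+1-g$ whenever $\deg M>2g-2$. The primitivity condition ``$\inf=D$ exactly'' is the analog of Schanuel's coprimality of coordinates, and I would impose it by M\"obius inversion over effective divisors (the $-1$ excluding the zero tuple), giving
\[
\#\{(x_i):\inf_i\Div(x_i)=D\}=\sum_{E\geq 0}\mu(E)\bigl(q^{(n+1)h^0(M-E)}-1\bigr),
\]
where $\mu(E)=(-1)^{\#\{P\}}$ for $E$ a sum of distinct points and $0$ otherwise. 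Summing over classes and substituting $N=M-E$ turns this into a convolution
\[
(q-1)A_r(\P^n)=\sum_{k\geq 0}c_k\,S_n(r-k),\qquad S_n(m):=\sum_{[N]\in\Pic^m(\mcC)}\bigl(q^{(n+1)h^0(N)}-1\bigr),
\]
where $\sum_k c_k t^k = Z(\mcC,t)^{-1}=\prod_P(1-t^{\deg P})$.

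Third I would extract the main term. For $m>2g-2$ Riemann--Roch gives $S_n(m)=h_K\bigl(q^{(n+1)(m+1-g)}-1\bigr)$, while $S_n(m)=0$ for $m<0$; only the finitely many boundary degrees $0\leq m\leq 2g-2$ are irregular. Feeding the regular range into the convolution and completing $\sum_k c_k q^{-(n+1)k}=Z(\mcC,q^{-(n+1)})^{-1}=\zeta_\mcC(n+1)^{-1}$ produces exactly
\[
\frac{h_K q^{(n+1)(1-g)}}{\zeta_\mcC(n+1)(q-1)}\,q^{(n+1)r}.
\]
The cleanest way to see that everything else assembles into $\sum_{i=1}^{2g}b_i(r)\alpha_i^r$ is to pass to generating functions: $(q-1)\sum_r A_r t^r=Z(\mcC,t)^{-1}\widehat{S_n}(t)$ is rational, with a simple pole at $t=q^{-(n+1)}$ giving the main term and further poles only at $t=\alpha_i^{-1}$, the reciprocals of the Frobenius eigenvalues. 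Crucially the apparent pole of $\widehat{S_n}$ at $t=1$ is cancelled by the zero of $Z(\mcC,t)^{-1}=(1-t)(1-qt)/\prod_i(1-\alpha_i t)$ there, so no constant term survives.

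The main obstacle is this last step: identifying the lower-order terms precisely and bounding them. I expect to have to track the irregular boundary degrees and the tail of the convolution, show that their contributions are governed by the partial-fraction expansion of $Z(\mcC,t)^{-1}$, hence are of the form (polynomial in $r$) times $\alpha_i^r$, and then invoke the Riemann Hypothesis for curves, $|\alpha_i|=\sqrt q$, to conclude $\sum_{i=1}^{2g}b_i(r)\alpha_i^r=O(q^{r/2+\varepsilon})$, the $\varepsilon$ absorbing the polynomial factors $b_i(r)$.
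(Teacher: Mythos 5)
Your outline is correct, but it is essentially a reconstruction of the original DiPippo--Wan argument via Riemann--Roch and the height zeta function, which is precisely the route this paper sets aside: the paper's stated purpose is to reprove the result by geometry of numbers, and its own proof (Theorem \ref{thm:WProjFin}) only recovers the main term with the weaker error $O(q^{r(n+1-1/d_\infty)})$. Concretely, the paper fixes a place $\infty$, works with the ring $\mcA$ of functions regular away from $\infty$, partitions $\P^n(K)$ by the class of the finite scaling divisor in $\Pic(\mcA)$ rather than by classes in $\Pic^{-r}(\mcC)$, and replaces your exact count $q^{(n+1)h^0(M)}$ of global sections by an approximate lattice-point count: the number of points of $\d^{n+1}$ in the dilated region $\mcF(r-\deg D)\subset K_\infty^{n+1}$ is computed via a Lipschitz-type principle of Bhargava--Shankar--Wang (Proposition \ref{prop:FunctionFieldLipschitz}), after which the same M\"obius inversion over effective divisors is applied. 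The trade-off is exactly where you would expect: the Lipschitz principle saves only one power of $|t|_\infty$, hence the weaker error term, whereas your route --- exact section counts, rationality of $(q-1)\sum_r A_r t^r = Z(\mcC,t)^{-1}\widehat{S_n}(t)$, and the Riemann Hypothesis for curves --- is what actually delivers the statement as written, with the $\sum_{i=1}^{2g}b_i(r)\alpha_i^r$ structure and $O(q^{r/2+\varepsilon})$. The step you flag as the main obstacle is genuine but routine bookkeeping: $\widehat{S_n}(t)$ is a rational function with simple poles only at $t=q^{-(n+1)}$ and $t=1$ plus a polynomial correction from the degrees $0\le m\le 2g-2$, and multiplying by $Z(\mcC,t)^{-1}=(1-t)(1-qt)\prod_{i}(1-\alpha_i t)^{-1}$ cancels the pole at $t=1$, leaving only the simple pole at $q^{-(n+1)}$ (the main term) and poles at the $\alpha_i^{-1}$ (the secondary terms, with polynomial coefficients when eigenvalues repeat). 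So your proposal is sound as a proof of Theorem \ref{thm:DiPippo-Wan}, but if the goal is to follow this paper, the content you would need instead is the covolume computation $m_\infty(K_\infty^{n+1}/\Lambda_\d)=N(\d)^{n+1}q^{(n+1)(g-1)}$ and the exact sequence $0\to\Pic^0_\mcC(\F_q)\to\Pic(\mcA)\to\Z/d_\infty\Z\to 0$ used to reassemble the class-by-class counts.
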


\section{Preliminaries}

Let $\mcC$ be a smooth, projective, geometrically connected curve over the finite field $\F_q$, let $K=\F_q(\mcC)$ denote the function field of $\mcC$, let $\infty$ be a fixed closed point of $\mcC$ of degree $d_\infty$ over $\F_q$, and let $\mcA\subset K$ be the ring of functions regular outside of $\infty$. 

Let $\Val(K)$ denote the set of discrete valuations of $K$; these are in bijection with the set of closed points on $\mcC$ (i.e. \emph{places}). Let $v_\infty$ be the valuation corresponding to the point $\infty$ and set $\Val_0(K):=\Val(K)-\{v_\infty\}$. For each place $v\in \Val(K)$ let $K_v$ denote the completion of $K$ with respect to $v$ and define an absolute value on $K_v$ by
\[
|x|_v:=q^{-v(x) \deg(v)}.
\]
Let $m_v$ denote the $v$-adic measure on $K_v$.
Let $\pi_v$ be a uniformizer at $v$, so that $|\pi_v|_v=q^{-\deg(v)}$.
Setting 
\[
\Ht_\infty(x):=-d_\infty \min_i\{v_\infty(x_i)\}
\]
and
\[
\Ht_0(x):=-\sum_{v\in \Val_0(K)} \deg(v) \min_i\{v(x_i)\}
\]
we may decompose the height function from the introduction as 
\[
\Ht(x)=\Ht_0(x)+ \Ht_\infty(x).
\]

Define the \emph{(finite) scaling divisor of $x$} by
\[
\mfD_0(x):=\sum_{v\in \Val_0(K)} \min_i\{v(x_i)\} P_v.
\]
Then $\Ht_0(x)=-\deg(\mfD_0(x))$.

For any $x=(x_1,\dots,x_n)\in K_\infty^n$ and $t\in K_\infty$ set $t\ast x:=(tx_1,\dots, tx_n)$.

\section{Geometry of numbers over global function fields}

The following result gives a function field analog of the Principal of Lipschitz, and is a special case of a result of Bhargava, Shankar, and Wang \cite[Proposition 33]{BSW15}.

\begin{proposition}\label{prop:FunctionFieldLipschitz}
Let $R$ be an open compact subset of $K_\infty^n$. Let $\Lambda$ be a rank $n$ lattice in $F_\infty^n$ and let $m_\Lambda$ be the constant multiple of the measure $m_\infty$ for which $m_\Lambda(K_\infty^n/\Lambda)=1$. For $t\in K_\infty$ set
\[
t\ast R = \{t\ast x : x\in R\}.
\]
Then
\[
\#\{(t\ast R) \cap \Lambda\}
= m_\Lambda (R) |t|_\infty^{n}  + O\left(|t|_\infty^{n-1}\right)=\frac{m_{\infty}(R)}{\m_\infty(K_\infty^n/\Lambda)}|t|_\infty^{n}  + O\left(|t|_\infty^{n-1}\right) ,
\]
where the implied constant depends only on $K$, $\infty$, $n$, $R$, and $\Lambda$.
\end{proposition}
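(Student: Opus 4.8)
The plan is to exploit the total disconnectedness of the local field $K_\infty$, which makes the non-archimedean lattice-point count essentially \emph{exact} rather than merely asymptotic, with the stated error term accounting only for the transition to the large-$|t|_\infty$ regime. Write $\mathcal{O}_\infty=\{x\in K_\infty:|x|_\infty\le 1\}$ for the valuation ring, so that $\mathcal{O}_\infty^n\supseteq\pi_\infty\mathcal{O}_\infty^n\supseteq\cdots$ is a neighborhood basis of $0$ by compact open subgroups. Since $R$ is open and compact and the balls $a+\pi_\infty^k\mathcal{O}_\infty^n$ are pairwise either nested or disjoint, a standard refinement argument writes $R$ as a finite disjoint union $R=\bigsqcup_{j=1}^N\bigl(a_j+\pi_\infty^m\mathcal{O}_\infty^n\bigr)$ of $N$ balls of a \emph{common} radius. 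Writing $t=u\pi_\infty^{v_\infty(t)}$ with $u\in\mathcal{O}_\infty^\times$, the scaled region is again a disjoint union of $N$ balls,
\[
t\ast R=\bigsqcup_{j=1}^N\Bigl(t\ast a_j+\pi_\infty^{\,m+v_\infty(t)}\mathcal{O}_\infty^n\Bigr),
\]
whose common radius $q^{-(m+v_\infty(t))d_\infty}$ tends to infinity as $|t|_\infty\to\infty$; moreover, since $x\mapsto t\ast x$ scales $n$-dimensional Haar measure by $|\det(tI)|_\infty=|t|_\infty^n$, we have $m_\infty(t\ast R)=|t|_\infty^n\,m_\infty(R)$.

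Next I would fix a bounded measurable fundamental domain $\mathcal{F}$ for $\Lambda$ in $K_\infty^n$; one exists because $\Lambda$ is cocompact, so some compact set surjects onto $K_\infty^n/\Lambda$ and may be carved into a measurable transversal. By definition $m_\infty(\mathcal{F})=m_\infty(K_\infty^n/\Lambda)$, and the normalization $m_\Lambda=m_\infty/m_\infty(K_\infty^n/\Lambda)$ gives $m_\Lambda(R)=m_\infty(R)/m_\infty(K_\infty^n/\Lambda)$. For each $x\in K_\infty^n$ let $x=\lambda_x+f_x$ be its unique decomposition with $\lambda_x\in\Lambda$ and $f_x\in\mathcal{F}$. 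The heart of the argument is the following ultrametric observation: choose $k_0$ with $\mathcal{F}\subseteq\pi_\infty^{k_0}\mathcal{O}_\infty^n$, and suppose $|t|_\infty$ is large enough that $k:=m+v_\infty(t)\le k_0$, so that $f_x\in\pi_\infty^{k}\mathcal{O}_\infty^n$ for every $x$. Because $t\ast R$ is a union of cosets of $\pi_\infty^{k}\mathcal{O}_\infty^n$ and $x$ differs from $\lambda_x$ by the element $f_x$ of that subgroup, we get $x\in t\ast R\iff\lambda_x\in t\ast R$. Equivalently, the translates $\{\lambda+\mathcal{F}:\lambda\in\Lambda\cap(t\ast R)\}$ tile $t\ast R$ \emph{exactly}, with no boundary defect.

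Summing volumes over this exact tiling then yields, for all sufficiently large $|t|_\infty$,
\[
\#\bigl(\Lambda\cap(t\ast R)\bigr)\,m_\infty(\mathcal{F})=m_\infty(t\ast R)=|t|_\infty^n\,m_\infty(R),
\]
so that $\#(\Lambda\cap(t\ast R))=m_\Lambda(R)\,|t|_\infty^n$ on the nose once $|t|_\infty$ exceeds the threshold $q^{(m-k_0)d_\infty}$. In particular the error term of the proposition is \emph{identically zero} in the regime $|t|_\infty\to\infty$ where the estimate has content, and a fortiori it is $O(|t|_\infty^{n-1})$; the implied constant depends only on $K$, $\infty$, $n$, $R$, and $\Lambda$ through the data $N$, $m$, $k_0$, and the covolume $m_\infty(K_\infty^n/\Lambda)$. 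Both equivalent forms of the main term follow immediately from $m_\Lambda(R)=m_\infty(R)/m_\infty(K_\infty^n/\Lambda)$.

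The main obstacle — and where care is genuinely needed — is the exactness in the second step: one must verify that the ultrametric inequality forces every translate $\lambda+\mathcal{F}$ with $\lambda\in t\ast R$ to lie entirely inside $t\ast R$, and conversely that every point of $t\ast R$ has its $\Lambda$-representative again in $t\ast R$. This is precisely the point at which the non-archimedean geometry improves on the classical Principle of Lipschitz, whose error term arises from lattice points straddling the boundary of the region; here, for $|t|_\infty$ large, no such straddling can occur. A subsidiary point requiring attention is the initial reduction to balls of \emph{equal} radius, since an arbitrary compact open set decomposes into balls of differing radii and the threshold $k_0$ must dominate the radius of $\mathcal{F}$ uniformly across the pieces of $R$.
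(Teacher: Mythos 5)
Your argument is correct, but there is no proof in the paper to compare it against: Proposition \ref{prop:FunctionFieldLipschitz} is quoted as a special case of \cite[Proposition 33]{BSW15} and used as a black box. What you have written is a complete, self-contained proof, and it actually establishes more than the stated estimate. Each step checks out: an open compact $R$ is a finite disjoint union of $N$ cosets of $\pi_\infty^{m}\mathcal{O}_\infty^{n}$ for a single $m$; scaling by $t$ turns these into $N$ cosets of the group $\pi_\infty^{m+v_\infty(t)}\mathcal{O}_\infty^{n}$ and multiplies Haar measure by $|t|_\infty^{n}$; a discrete cocompact $\Lambda$ admits a bounded measurable fundamental domain $\mathcal{F}$ with $m_\infty(\mathcal{F})=m_\infty(K_\infty^{n}/\Lambda)$; and once $\mathcal{F}\subseteq\pi_\infty^{m+v_\infty(t)}\mathcal{O}_\infty^{n}$ the ultrametric equivalence $x\in t\ast R\iff\lambda_x\in t\ast R$ makes the translates $\lambda+\mathcal{F}$, $\lambda\in\Lambda\cap t\ast R$, tile $t\ast R$ exactly, so that $\#(\Lambda\cap t\ast R)=m_\Lambda(R)\,|t|_\infty^{n}$ with zero error for $|t|_\infty\geq q^{(m-k_0)d_\infty}$. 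This exactness is the genuine non-archimedean advantage over Schanuel's setting, where lattice points straddling the boundary force a real $O(|t|^{n-1})$ term; the full result of Bhargava--Shankar--Wang must treat regions that are not open compact (merely bounded with negligible boundary), which is why their error term survives in general. Your concern about uniformity of the radius across the pieces of $R$ is already disposed of by your own reduction to a single common $m$, so nothing further is needed there.

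One small point for completeness: for the finitely many values of $|t|_\infty$ with $1\leq|t|_\infty<q^{(m-k_0)d_\infty}$, the set $t\ast R$ lies in a fixed compact ball, so $\#(\Lambda\cap t\ast R)$ is uniformly bounded by discreteness and the asserted formula holds with an $O(1)\leq O(|t|_\infty^{n-1})$ error; this covers every case used in Section \ref{sec:PROJ}, where $|t|_\infty$ is a nonnegative power of $q$. (As literally stated, with $n\geq 2$ and $0\in R\cap\Lambda$, the bound $O(|t|_\infty^{n-1})$ would fail as $|t|_\infty\to 0$, but that is a defect of the quoted statement, not of your argument, and is irrelevant to the application.)
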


As an important example, note that $\mcA^n$ may naturally be viewed as a lattice in $K_\infty^n$. Following Weil \cite[2.1.3 (b)]{Wei82}, we will compute the covolume $m_\infty(K_\infty^n/\mcA^n)$ of the lattice $\mcA^n$. First note that $m_\infty(K_\infty^n/\mcA^n)=m_\infty(K_\infty/\mcA)^n$. Since $\pi_\infty$ has a zero at $\infty$ it must have a pole somewhere in $\mcC-\infty$, and thus $\mcA\cap (\pi_\infty)=\{0\}$. Thus $K_\infty/\mcA$ has a set of representatives consisting of cosets of $(\pi_\infty)$; more specifically,
\[
K_\infty/\mcA=\bigsqcup_{F\in \{f\in K\ :\ \Div(f)\geq \infty\}} F \cdot (\pi_\infty).
\]
 By Riemann-Roch, $\#\{f\in K\ :\ \Div(f)\geq \infty\}=q^{d_\infty+g-1}$. As $m_\infty((\pi_\infty))= q^{-d_\infty}$, it follows that
 \[
 m_\infty(K_\infty^n/\mcA^n)=q^{n(g-1)}.
 \]
For $\a\subseteq \mcA$ an integral ideal of $\mcA$, let $N(\a):=\#(\mcA/\a)$ denote its norm.
Then the lattice $\Lambda_\a:=\a\times\cdots\times \a\subset K_\infty^n$ has covolume
\[
m_\infty(K_\infty^n/\Lambda_\a)=N(\a)^n q^{n(g-1)}.
\]
 By Proposition \ref{prop:FunctionFieldLipschitz} we have the following proposition:

\begin{proposition}\label{prop:FunctionFieldIdealLipschitz}
Let $R$ be an open compact subset of $K_\infty^n$. Let $\a\subset \mcA$ be an integral ideal and let $\Lambda_\a$ denote the rank $n$ lattice $\a\times\cdots\times \a$ in $F_\infty^n$.
Then
\[
\#\{(t\ast R) \cap \Lambda_\a\}
=\frac{m_{\infty}(R)}{N(\a)^n q^{n(g-1)}}|t|_\infty^{n}  + O\left(|t|_\infty^{n-1}\right) ,
\]
where the implied constant depends only on $K$, $\infty$, $n$, $R$, and $\a$.
\end{proposition}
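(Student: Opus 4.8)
The plan is to reduce Proposition \ref{prop:FunctionFieldIdealLipschitz} directly to Proposition \ref{prop:FunctionFieldLipschitz} by specializing the lattice $\Lambda$ to $\Lambda_\a=\a\times\cdots\times\a$. Proposition \ref{prop:FunctionFieldLipschitz} already provides the counting asymptotic
\[
\#\{t\ast R\cap\Lambda\}=\frac{m_\infty(R)}{m_\infty(K_\infty^n/\Lambda)}|t|_\infty^n+O\!\left(|t|_\infty^{n-1}\right)
\]
for an arbitrary rank $n$ lattice $\Lambda$. So essentially the only mathematical content needed is the computation of the covolume $m_\infty(K_\infty^n/\Lambda_\a)$; once that is in hand, substituting it into the general formula yields the claimed main term, and the error term is inherited verbatim.

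First I would verify that $\Lambda_\a$ is genuinely a rank $n$ lattice in $K_\infty^n$. Since $\a$ is a nonzero integral ideal of $\mcA$, it is a rank one projective $\mcA$-module, and $\mcA$ itself embeds as a rank one lattice in $K_\infty$ (as recorded in the discussion preceding the proposition, where $m_\infty(K_\infty/\mcA)=q^{g-1}$). Taking the $n$-fold product shows $\Lambda_\a$ is discrete and cocompact of full rank, so Proposition \ref{prop:FunctionFieldLipschitz} applies to it.

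Next I would compute the covolume. Because the covolume multiplies across products, $m_\infty(K_\infty^n/\Lambda_\a)=m_\infty(K_\infty/\a)^n$. The key identity is that passing from $\mcA$ to the sublattice $\a$ scales the covolume by the lattice index $[\mcA:\a]=\#(\mcA/\a)=N(\a)$, so that $m_\infty(K_\infty/\a)=N(\a)\cdot m_\infty(K_\infty/\mcA)=N(\a)\,q^{g-1}$. Raising to the $n$-th power gives
\[
m_\infty(K_\infty^n/\Lambda_\a)=N(\a)^n q^{n(g-1)},
\]
which is exactly the covolume asserted in the excerpt immediately before the statement. Substituting this into the general asymptotic of Proposition \ref{prop:FunctionFieldLipschitz} produces the main term $\frac{m_\infty(R)}{N(\a)^n q^{n(g-1)}}|t|_\infty^n$, completing the proof.

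The main point requiring care—rather than a genuine obstacle—is the covolume scaling claim $m_\infty(K_\infty/\a)=N(\a)\,m_\infty(K_\infty/\mcA)$. The cleanest justification is the standard lattice index formula: for a finite-index sublattice $\Lambda'\subseteq\Lambda$ one has $m_\infty(K_\infty^n/\Lambda')=[\Lambda:\Lambda']\,m_\infty(K_\infty^n/\Lambda)$, applied here with $\Lambda=\mcA$ and $\Lambda'=\a$ so that the index is $N(\a)$. One should note that the implied constant in the error term genuinely depends on $\a$ (through $\Lambda_\a$), consistent with the dependence on $\Lambda$ already allowed in Proposition \ref{prop:FunctionFieldLipschitz}; I would also remark that the covolume computation is exactly the function-field analog of Weil's normalization recalled above, so no new analytic input is needed.
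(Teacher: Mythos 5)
Your proposal is correct and follows exactly the paper's own route: the paper likewise derives Proposition \ref{prop:FunctionFieldIdealLipschitz} by specializing Proposition \ref{prop:FunctionFieldLipschitz} to $\Lambda_\a$ and computing $m_\infty(K_\infty^n/\Lambda_\a)=N(\a)^n q^{n(g-1)}$ from $m_\infty(K_\infty^n/\mcA^n)=q^{n(g-1)}$ together with the index $[\mcA:\a]=N(\a)$. Your added justifications (that $\Lambda_\a$ is a full-rank lattice, and the index--covolume scaling) only make explicit what the paper leaves implicit.
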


\section{Counting points on projective spaces}\label{sec:PROJ}

\begin{theorem}\label{thm:WProjFin}
The number of $K$-rational points of bounded height on projective $n$ space over $K$ is
\begin{align*}
\#\{x\in \P^n(K): \Ht(x) = r\}
= \kappa q^{(n+1)r}
 + \begin{cases}
O\left(q^r\log(q^r)\right) &  \text{ if } n=d_\infty=1\\
O\left(q^{r(n+1-1/d_\infty)}\right) & \text{ else, }
\end{cases}
\end{align*}
where the leading coefficient is
\[
\kappa=\frac{h_K q^{(n+1)(1-g)}}{\zeta_K(n+1)(q-1)}.
\]
\end{theorem}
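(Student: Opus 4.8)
The plan is to adapt Schanuel's number field argument, using the class number $h_K$ to stratify $\P^n(K)$ according to the ideal class of the finite scaling divisor $\mfD_0(x)$, and then to invoke the lattice-point count of Proposition \ref{prop:FunctionFieldIdealLipschitz} on each stratum. First I would set up a parametrization: a point $x\in\P^n(K)$ of height $r$ has a well-defined finite scaling divisor $\mfD_0(x)$, and after clearing denominators one may represent $x$ by a primitive integral vector $(x_0,\dots,x_n)$ whose entries generate an ideal $\a$ in a fixed ideal class $[\a]\in\Pic(\mcA)$. The $K^\times$-scaling ambiguity of homogeneous coordinates corresponds exactly to multiplication by units of $\mcA$, so the count over each class reduces to counting primitive lattice points (points with coprime coordinates, i.e. generating the full ideal $\a$) inside a scaled region $t\ast R$, modulo the unit action. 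Here $R$ should be taken to be the ``unit cube'' region in $K_\infty^{n+1}$ cut out by the condition $\Ht_\infty \le 0$, and $|t|_\infty$ encodes the size $q^r$.

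Next I would carry out the count in two stages. Stage one applies Proposition \ref{prop:FunctionFieldIdealLipschitz} to count \emph{all} (not necessarily primitive) lattice points of $\Lambda_\a$ in the scaled region, giving a main term proportional to $m_\infty(R)\,q^{(n+1)r}/\!\left(N(\a)^{n+1}q^{(n+1)(g-1)}\right)$ together with the error $O(q^{rn/d_\infty})$ coming from the proposition's $|t|_\infty^{n}$ term (noting $|t|_\infty$ is a power of $q^{d_\infty}$). Stage two removes the non-primitive points by Möbius inversion over integral ideals $\b$, which introduces the factor $\sum_{\b}\mu(\b)N(\b)^{-(n+1)}=\zeta_K(n+1)^{-1}$; this is where the $\zeta_K(n+1)$ in the denominator of $\kappa$ appears. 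Summing the main terms over the $h_K$ ideal classes, and accounting for the order of the unit group of $\mcA$ acting on coordinates — which produces the $(q-1)$ in the denominator, since the constant functions $\F_q^\times$ are the relevant units up to the degree-of-$\infty$ bookkeeping — should assemble the constant $\kappa = h_K q^{(n+1)(1-g)}/\!\left(\zeta_K(n+1)(q-1)\right)$ after computing $m_\infty(R)$ explicitly.

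The main obstacle I expect is twofold. First, one must verify that the volume $m_\infty(R)$ of the height-$\le 0$ region at infinity, combined with the unit normalization, yields exactly the stated constant; this requires a careful Riemann--Roch computation of the space of functions regular away from $\infty$ with prescribed pole order, analogous to the $q^{d_\infty+g-1}$ count performed in the preliminaries. Second, and more delicate, is controlling the error term in the Möbius inversion: the tail of the sum over large-norm ideals $\b$ must be truncated and estimated, and the $O(q^{r(n+1-1/d_\infty)})$ error (respectively the $O(q^r\log q^r)$ in the exceptional case $n=d_\infty=1$) must be shown to dominate the accumulated errors from Proposition \ref{prop:FunctionFieldIdealLipschitz} summed over both ideal classes and the Möbius variable. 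The exceptional logarithmic case arises precisely when $n=d_\infty=1$, where the geometric series governing the error sum degenerates and produces a harmonic-sum contribution of size $r$, exactly paralleling the $\P^1(\Q)$ exception in Schanuel's Theorem \ref{thm:Schanuel}.
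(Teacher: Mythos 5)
Your proposal follows essentially the same route as the paper's proof: partition $\P^n(K)$ by the class of $\mfD_0(x)$ in $\Pic(\mcA)$, identify each stratum with $\mcA^\times$-orbits of integral vectors having prescribed scaling divisor, count lattice points of $\Lambda_\a$ in the dilated region at infinity via Proposition \ref{prop:FunctionFieldIdealLipschitz}, sift down to the exact scaling divisor by M\"obius inversion (which produces the factor $\zeta_K(n+1)^{-1}$), and sum over classes with the unit count $\#\F_q^\times=q-1$; the volume computation and the error accumulation in the M\"obius sum, including the degenerate logarithmic case $n=d_\infty=1$, are exactly the delicate points the paper handles. The one place your outline needs repair is the final summation over classes: you stratify over $\Pic(\mcA)$, which has order $h_\mcA=h_K d_\infty$ rather than $h_K$, and the constant comes out as $h_K$ only because $M(D,r)=0$ unless $\deg D\equiv r\pmod{d_\infty}$, with the exact sequence $0\to\Pic^0_\mcC(\F_q)\to\Pic(\mcA)\to\Z/d_\infty\Z\to 0$ showing that exactly $h_K$ classes meet this congruence for each $r$ --- summing na\"ively over ``the $h_K$ ideal classes'' as written would be off by a factor of $d_\infty$ whenever $d_\infty>1$. (A minor point: the error inherited from the lattice count is $O\left(|t|_\infty^{n}\right)=O\left(q^{rn}\right)$, not $O\left(q^{rn/d_\infty}\right)$, but either bound is absorbed into the stated final error term.)
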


\begin{proof}

Let $h=h_\mcA$ denote the size of the divisor class group $\Pic(\mcA)$ of $\mcA$, and let $D_1,\dots,D_h$ be a set of effective divisor representatives of the divisor class group $\Pic(\mcA)$. Then we have the following partition of $\P^n(K)$ into points whose scaling divisors are in the same ideal class,
\[
\P^n(K)=\bigsqcup_{i=1}^h \{x\in \P^n(K): [\mfD_0(x)]=[D_i]\}.
\] 
For each $D\in \{D_1,\dots,D_h\}$ define a counting function
\[
M(D,r):=\#\{x\in \P^n(K): \Ht(x)= r,\ [\mfD_0(x)]=[D]\}.
\]

Consider the action of the unit group $\mcA^\times=\F_q^\times$ on $K^{n+1}-\{0\}$ by $u\ast(x_0,\dots,x_n):=(u x_0,\dots,u x_n)$, and let $(K^{n+1}-\{0\})/\mcA^\times$ denote the corresponding set of orbits. We may describe $M(D,r)$ in terms of $\mcA^\times$-orbits in the affine cone of $\P^n(K)$: 

\begin{lemma}
There is a bijection between the sets
\[
 \{x=[(x_0,\dots,x_n)]\in (K^{n+1}-\{0\})/\mcA^\times : \Ht_\infty(x) - \deg(D)=r,\ \mfD_0(x)=D\}
 \]
and
\[
\{x=[x_0:\cdots:x_n]\in \P^n(K): \Ht(x)=r,\ [\mfD_0(x)]=[D]\},
\]
 given by
 \[
 [(x_0,\dots,x_n)] \mapsto [x_0:\dots:x_n].
 \]
 \end{lemma}
 
 \begin{proof}
 First note that the map is well defined, for if
$x=[(x_0,\dots,x_n)]$ and $y=[(y_0,\dots,y_n)]$ are points of $(K^{n-1}-\{0\})/\mcA^\ast$ with the following properties
\begin{itemize}
\item $x=y$,
\item $\Ht_\infty(x) - \deg(D)=\Ht_\infty(y) -\deg(D)=r$, and
\item $\mfD_0(x)=\mfD_0(y)=D$,
\end{itemize}
 then
 \begin{itemize}
 \item $[x_0:\cdots: x_n]=[y_0:\cdots:y_n]$ (since there exists a $u\in \mcA^\ast$ such that $(x_0,\dots,x_n)=(uy_0,\dots,uy_n)$),
 \item $\Ht([x_0:\dots:x_n])=\Ht([y_0:\cdots:y_n])=r$, and 
 \item $[\mfD_0(x)]=[\mfD_0(y)]=[D]$.
 \end{itemize}

 To show that the map is injective, let $x=[(x_0,\dots,x_n)]$ and $y=[(y_0,\dots,y_n)]$ be elements of $(K^{n+1}-\{0\})/\mcA^\times$ satisfying $\Ht_\infty(x)=r-\deg(D)$ and $\mfD_0(x)=D$ and such that $[x_0:\cdots:x_n]=[y_0:\cdots:y_n]$. Then there exists a $\lambda\in K^\times$ such that $(\lambda x_0,\dots, \lambda x_n)=(y_0,\dots,y_n)$. But then
 \[
 \Ht_\infty(x)=\Ht_\infty(y)=\Ht_\infty(\lambda \ast x)=-d_\infty v_\infty(\lambda)+ \Ht_\infty(x).
 \]
 Thus we must have $v_\infty(\lambda)=0$, so $\lambda\in \mcA^\times$. This shows that $x=y$, so the map is injective.
 
 To show that the map is surjective, let $x=[x_0:\dots:x_n]\in \P^n(K)$ be such that $\Ht(x)=r$ and $[\mfD_0(x)]=[D]$. Since $[\mfD_0(x)]=[D]$, the fractional ideals $\mfD_0(x)$ and $D$ differ by a principal ideal, which we will denote by $(\lambda)$ with $\lambda\in \mcA$. Then $\mfD_0(\lambda\ast x)=D$ and the element $[(\lambda x_0,\dots, \lambda x_n)]$ of $(K^{n+1}-\{0\})/\mcA^\times$ satisfies $\mcD_0((\lambda x_0,\dots, \lambda x_n))=D$ and $\Ht_\infty((\lambda x_0,\dots, \lambda x_n))+\deg(D)=r$, and is in the pre-image of $[x_0:\cdots:x_n]$. Therefore the map is surjective. 
 \end{proof}

From this lemma it follows that
\[
M(D,r)=\#\{x\in (K^{n+1}-\{0\})/\mcA^\times :\Ht_{\infty}(x) - \deg(D)=r,\ \mfD_0(x)=D\}.
\]
Our general strategy will be to first find an asymptotic for the counting function
\[
M'(D,r):=\#\{x\in (K^{n+1}-\{0\})/\mcA^\times : \Ht_{\infty}(x) - \deg(D) =r,\ D\leq\mfD_0(x) \},
\]
and then use M\"obius inversion to obtain an asymptotic formula for $M(D,r)$.

Note that each $\mcA^\times$-orbit of an element of $(K-\{0\})^{n+1}$ contains exactly 
\[
\# \mcA^\times=\# \F_q^\times=(q-1)
\]
 elements. Viewing $\mcA^{n+1}$ as a lattice of full rank in $K_\infty^{n+1}$, we have that
\[
M'(D,r)=\frac{1}{q-1}\#\{x\in (K^{n+1}-\{0\}) : \Ht_{\infty}(x)=r + \deg(D),\ D\leq \mfD_0(x)\}.
\]
As $D$ is an effective divisor, if $D\leq \mfD_0(x)$, then $\mfD_0(x)$ is also an effective divisor. Therefore we must have that $x\in \mcA^{n+1}-\{0\}$, and so
\[
M'(D,r)=\frac{1}{q-1}\#\{x\in (\mcA^{n+1}-\{0\}) : \Ht_{\infty}(x)=r + \deg(D),\ D\leq \mfD_0(x)\}.
\]


Define the following open bounded subsets of $K_{\infty}^{n+1}-\{0\}$:
\[
\mcF(r):=\{x\in  K_\infty^{n+1}-\{0\} : \Ht_{\infty}(x)= r\}.
\]
Note that these sets are $\mcA^\times$-stable, in the sense that if $u\in \mcA^{\times}$ and $x\in \mcF(r)$ then $u\ast x\in \mcF(r)$; this can be seen by the following computation: 
\[
\Ht_{\infty}(u\ast x)
= -d_\infty \min_i\{v_\infty(u x_{i})\}
= -d_\infty \min_i\{v_\infty(u)+v_\infty(x_{i})\} 
= -d_\infty\min_i\{0+v_\infty(x_{i})\} 
= \Ht_{\infty}(x).
\]
 Similarly, for any $t\in K_\infty$, we have that
\[
\Ht_{\infty}(t\ast x)
= -d_\infty\min_i\{v_\infty(t x_{i})\}
= -d_\infty\min_i\{v_\infty(t)+ v_\infty(x_{i})\}
=\Ht_{\infty}(x)-d_\infty v_\infty(t).
\]
From this it follows that, for all $r\in d_\infty \Z$,
\[
\mcF(r)=\pi_\infty^{-r/d_\infty}\ast \mcF(0).
\]
As the image of $v_\infty: K_\infty\to \Z$ is $d_\infty \Z$, for $r\not\in d_\infty\Z$ we have $\mcF(r)=\emptyset$.

Let $\d\subset \mcA$ denote the ideal corresponding to the effective divisor $D$, and define the lattice 
\[
\Lambda_D:=\d\times \d \times\cdots\times \d\subset K_\infty^{n+1}.
\]
 Then
\[
M'(D,r)=\frac{\#\{(\Lambda_D-\{0\})\cap \mcF(r+\deg(D))\}}{q-1}.
\]
Applying Proposition \ref{prop:FunctionFieldIdealLipschitz} with $R=\mcF(0)$ and $\a=\d$, we obtain the following asymptotic when $r\equiv -\deg(D) \pmod{d_\infty}$,
\begin{align}\label{eq:I<=c asymptotic}
M'(D,r)&=\frac{m_\infty(\mcF(0))}{N(\d)^{n+1}q^{(n+1)(g-1)}(q-1)} (q^r N(\d))^{n+1}+ O\left(q^{r(n+1-1/d_\infty)}\right)\\
&=\frac{m_\infty(\mcF(0))}{q^{(n+1)(g-1)}(q-1)} q^{r(n+1)}+ O\left(q^{r(n+1-1/d_\infty)}\right).
\end{align}
Note that $m_\infty(\mcF(0))=1-q^{-d_\infty (n+1)}$. When $r \not\equiv \deg(D)\pmod{d_\infty}$, then $M'(D,r)=0$.

For $x\in \Lambda_D$, note that $\mfD_0(x)=D+D'$ for some effective divisor $D'$. Then
\[
M'(D,r)=\sum_{D'\in \DIV^+(\mcC)} M(D+D', r-\deg(D'))= \sum_{\substack{D'\in \DIV^+(\mcC)\\ \deg(D') \leq r}} M(D+D', r-\deg(D')).
\]

We now apply M\"obius inversion and use our asymptotic (\ref{eq:I<=c asymptotic}) for $M'(D,r)$. For $r\equiv -\deg(D) \pmod{d_\infty}$,
\begin{align*}
M(D,r) &= \sum_{\substack{D'\in \DIV^+(\mcC) \\ \deg(D') \leq r}} \mu(D') M'(D+D', r-\deg(D'))\\
&=\sum_{\substack{D'\in \DIV^+(\mcC) \\ \deg(D') \leq r}} \mu(D')\left(\frac{1-q^{d_\infty(n+1)}}{q^{(n+1)(g-1)}(q-1)}\left(q^{r-\deg(D')}\right)^{n+1}+ O\left(\left(q^{r-\deg(D')}\right)^{n+1-1/d_\infty}\right)\right)\\
&=\frac{1-q^{d_\infty(n+1)}}{q^{(n+1)(g-1)}(q-1)} q^{r(n+1)}
\left(\sum_{D'\in \DIV^+(\mcC)} \mu(D')q^{-(n+1)\deg(D')}
 - \sum_{\substack{D'\in \DIV^+(\mcC) \\ \deg(D') \leq r}}\mu(D')q^{-(n+1)\deg(D')}\right)\\ 
&\hspace{20mm} +O\left(q^{r(n+1-1/d_\infty)} \sum_{\substack{D'\in \DIV^+(\mcC) \\ \deg(D') \leq r}}q^{-(n+1-1/d_\infty)\deg(D')}\right)\\
&=\frac{1-q^{d_\infty(n+1)}}{q^{(n+1)(g-1)}(q-1)} q^{r(n+1)}\left(\frac{1}{\zeta_\mcA(n+1)}
 - O\left(q^{-rn}\right)\right) 
+ \begin{cases}
O\left(q^{r}\log(q^r)\right) & \text{ if } n=d_\infty=1\\
O\left(q^{r(n+1-1/d_\infty)}\right) & \text{ else, }
\end{cases}\\
&= \frac{1}{\zeta_K(n+1)q^{(n+1)(g-1)}(q-1)} q^{r(n+1)}
 + \begin{cases}
O\left(q^{r}\log(q^r)\right) & \text{ if } n=d_\infty=1\\
O\left(q^{r(n+1-1/d_\infty)}\right) & \text{ else. }
\end{cases}
\end{align*}
When $r\not\equiv \deg(D) \pmod{d_\infty}$ we have $M(D,r)=0$.

From the exact sequence 
\[
0 \rightarrow \Pic^0_\mcC(\F_q) \rightarrow \Pic(\mcA) \xrightarrow{\deg} \Z/d_\infty\Z \rightarrow 0
\]
(see e.g. \cite[Lemma 4.1.2]{Gos96}), 
we have that for any $a\in \Z/d_\infty \Z$ there are exactly $h_K=\#\Pic^0_\mcC(\F_q)$ elements of $\Pic(\mcA)$ of degree congruent to $a$ modulo $d_\infty$. Therefore, summing over the divisor class representatives $D_i$ of $\Pic(\mcA)$ gives
\begin{align*}
\#\{x\in \P^n(K): \Ht(x) = r\}
= \kappa q^{r(n+1)}
 + \begin{cases}
O\left(q^{r}\log(q^r)\right) & \text{ if } n=d_\infty=1\\
O\left(q^{r(n+1-1/d_\infty)}\right) & \text{ else, }
\end{cases}
\end{align*}
where
\[
\kappa=\frac{h_{K}}{\zeta_K(n+1)q^{(n+1)(g-1)}(q-1)}.
\]
\end{proof}

\begin{remark}
This proof can easily be modified to count points of bounded height on weighted projective spaces, giving an alternative proof of the main result in \cite{Phi24}. We have chosen only to present the proof of projective space in order to make the arguments more transparent.
\end{remark}

\bibliographystyle{alpha}
\bibliography{bibfile}

\end{document}